\def\cal H{{\mathcal H}}
\def\R{\mathbb{R}}
\def\dom{{\text{\rm dom\,}}}
\def\phi{\varphi}
\def\d{\textup{d}}
\DeclareMathOperator{\diver}{div}
\renewcommand{\theta}{\vartheta}
\newtheorem{theorem}{Theorem}[section]
\newtheorem*{thm*}{Theorem}
\newtheorem{proposition}[theorem]{Proposition}
\newtheorem{corollary}[theorem]{Corollary}
\newtheorem{lemma}[theorem]{Lemma}
\theoremstyle{definition}
\newtheorem{remark}[theorem]{Remark}
\numberwithin{equation}{section}
\title[]{A remark on the order of mixed Dirichlet--Neumann eigenvalues of polygons}
\author[J.~Rohleder]{Jonathan Rohleder}
\address{Matematiska institutionen\\ Stockholms universitet \\
106 91 Stockholm \\
Sweden}
\email{jonathan.rohleder@math.su.se}
\begin{document}

\begin{abstract}
Given the Laplacian on a planar, convex domain with piecewise linear boundary subject to mixed Dirichlet--Neumann boundary conditions, we provide a sufficient condition for its lowest eigenvalue to dominate the lowest eigenvalue of the Laplacian with the complementary boundary conditions (i.e.\ with Dirichlet replaced by Neumann and vice versa). The application of this result to triangles gives an affirmative partial answer to a recent conjecture. Moreover, we prove a further observation of similar flavor for right triangles.
\end{abstract}

\maketitle

\section{Introduction}

We consider the Laplacian $- \Delta_\Gamma$ on a bounded, convex polygon $\Omega \subset \R^2$ subject to a Dirichlet boundary condition on a part $\Gamma$ of the boundary and a Neumann boundary condition on the complement $\Gamma^c$. The operator $- \Delta_\Gamma$ is self-adjoint and has a purely discrete spectrum, and its lowest eigenvalue $\lambda_1^\Gamma$ is positive, provided $\Gamma$ is nonempty. It is clear that enlarging $\Gamma$ leads to an increase of $\lambda_1^\Gamma$, but making a different choice of $\Gamma$ with the same or a larger length may in some cases lead to a smaller value of $\lambda_1^\Gamma$, that is, $\lambda_1^\Gamma$ does not depend monotonously on the size of $\Gamma$. 

The present note provides two observations on monotonicity properties of the lowest eigenvalue with respect to the choice of $\Gamma$, and it is inspired by recent results for triangles and other special domains in~\cite{S16}, see also the survey~\cite{LS17}. In the first result of this note, Theorem~\ref{thm:split}, we compare $\lambda_1^\Gamma$ to the lowest eigenvalue $\lambda_1^{\Gamma^c}$ of the mixed Laplacian $- \Delta_{\Gamma^c}$ satisfying the complementary boundary conditions, i.e., a Dirichlet condition on $\Gamma^c$ and a Neumann condition on $\Gamma$. We show that the inequality
\begin{align*}
 \lambda_1^{\Gamma^c} \leq \lambda_1^\Gamma
\end{align*}
holds if $\Gamma^c$ consists of one single side of the polygon $\Omega$ and the two angles where $\Gamma$ and $\Gamma^c$ meet are both strictly smaller than $\pi/2$; see Figure~\ref{fig:examples} for examples.
\begin{figure}[h]
\begin{tikzpicture}
\pgfsetlinewidth{0.8pt}
\color{gray}
\pgfputat{\pgfxy(-3,-1.2)}{\pgfbox[center,base]{$\Gamma^c$}}
\pgfxyline(-0.7,-0.8)(-5,-0.8)
\color{black}
\pgfxyline(-5,-0.8)(-4,0.3)
\pgfxyline(-4,0.3)(-2,0.7)
\pgfxyline(-2,0.7)(-1,0.5)
\pgfxyline(-1,0.5)(-0.7,-0.8)
\pgfputat{\pgfxy(-2.5,-0.3)}{\pgfbox[center,base]{$\Omega$}}
\pgfputat{\pgfxy(-4.3,0.3)}{\pgfbox[center,base]{$\Gamma$}}
\end{tikzpicture} 
\qquad \qquad \qquad
\begin{tikzpicture}
\pgfsetlinewidth{0.8pt}
\color{gray}
\pgfputat{\pgfxy(-3.2,0.1)}{\pgfbox[center,base]{$\Gamma^c = L$}}
\pgfxyline(-3,-0.8)(-4.8,1)
\color{black}
\pgfxyline(-5,-0.8)(-3,-0.8)
\pgfxyline(-4.8,1)(-5,-0.8)
\pgfputat{\pgfxy(-4.3,-0.3)}{\pgfbox[center,base]{$\Omega$}}
\pgfputat{\pgfxy(-5.25,-0.9)}{\pgfbox[center,base]{$\Gamma$}}
\end{tikzpicture}
\vspace*{2mm}
\caption{Two settings for which $\lambda_1^{\Gamma^c} \leq \lambda_1^\Gamma$ holds. In each case, $\Gamma$ and $\Gamma^c$ are drawn in black and gray, respectively.}
\label{fig:examples}
\end{figure}
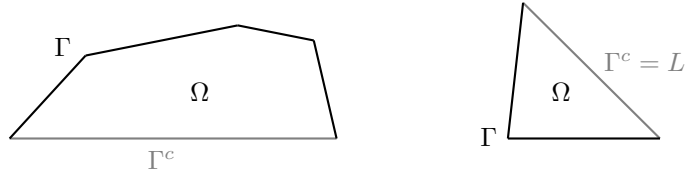
This result can be used to affirm a part of a conjecture on the lowest eigenvalues of triangles raised by Siudeja in~\cite[Conjecture~1.2]{S16}. In fact, for an arbitrary triangle whose sides we denote by $S, M$ and $L$, ordered nondecreasingly by their lengths, it follows
\begin{align*}
 \lambda_1^L \leq \lambda_1^{M \cup S},
\end{align*}
which, to the best of our knowledge, was known before only for certain classes of right triangles. 

The second result of this note, Theorem~\ref{thm:triangle}, is more restrictive and applies only to right triangles. It complements the recent results in~\cite{S16} by stating that
\begin{align*}
 \max \{\lambda_1^S, \lambda_1^M\} \leq \lambda_1^L
\end{align*}
holds for any right triangle, i.e., imposing the Dirichlet condition on the hypotenuse always leads to a larger (or equal) lowest eigenvalue than having the Dirichlet condition on one of the catheti. 

The proofs of Theorem~\ref{thm:split} and Theorem~\ref{thm:triangle} rely on plugging a certain partial derivative of an eigenfunction into the Rayleigh quotient and using an integral identity for the second partial derivatives of Sobolev functions on polygons. A similar approach was used for the comparison of mixed and Dirichlet Laplacian eigenvalues on polygons and polyhedra in~\cite{LR17}, see also~\cite{LW86}.

Let us finally mention that properties of eigenvalues of the Laplacian with mixed boundary conditions on special polygons have played an important role in various contexts. For instance, they were used in the famous construction of isospectral domains in~\cite{GWW92} and, more recently, in connection with the hot spots conjecture in~\cite{S15}.

\section{Preliminaries}

Let us set the stage and collect a few ingredients for the proofs of our main results. Recall first that for $\Gamma$ being any choice of sides of the polygon $\Omega$ the (negative) Laplacian $- \Delta_\Gamma$ can be defined as the self-adjoint operator in $L^2 (\Omega)$ which corresponds to the semibounded, closed quadratic form
\begin{align*}
 H_{0, \Gamma}^1 (\Omega) := \left\{ u \in H^1 (\Omega) : u |_\Gamma = 0 \right\} \ni u \mapsto \int_\Omega |\nabla u|^2 \d x.
\end{align*}
The functions in the domain of $- \Delta_\Gamma$ satisfy a Dirichlet boundary condition on $\Gamma$ and a Neumann boundary condition (in a weak sense, see, e.g.,~\cite[Lemma~4.3]{M00} for a definition of the weak Neumann trace) on the complement $\Gamma^c = \partial \Omega \setminus \Gamma$. If $u \in \dom (- \Delta_\Gamma)$ is sufficiently regular (see Proposition~\ref{prop:regularity} below) then the Neumann condition on $\Gamma^c$  can be interpreted in the usual sense, requiring the trace of $\nabla u \cdot \nu$ to vanish on $\Gamma^c$, where $\nu$ is the outer unit normal field on the boundary.

The operator $- \Delta_\Gamma$ has a compact resolvent and its lowest eigenvalue $\lambda_1^\Gamma$ is positive provided $\Gamma$ is nonempty. It is nondegenerate and can be expressed by the variational identity
\begin{align}\label{eq:minmax}
 \lambda_1^\Gamma = \min_{u \in H_{0, \Gamma}^1 (\Omega)} \frac{\int_\Omega |\nabla u|^2 \d x}{\int_\Omega |u|^2 \d x}.
\end{align}

Below we make use of the following regularity result which follows from~\cite[Theorem~4.4.3.3 and Lemma~4.4.1.4]{G85}.

\begin{proposition}\label{prop:regularity}
Assume that all angles at which $\Gamma$ and $\Gamma^c$ meet are strictly less than $\pi/2$. Then $\dom (- \Delta_\Gamma) \subset H^2 (\Omega)$.
\end{proposition}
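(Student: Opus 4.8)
The plan is to invoke the corner-regularity theory for second-order elliptic boundary value problems on polygons underlying the cited results of Grisvard. Fix $u \in \dom (- \Delta_\Gamma)$ and put $f := - \Delta_\Gamma u \in L^2 (\Omega)$, so that $- \Delta u = f$ holds together with a Dirichlet condition on $\Gamma$ and a (weak) Neumann condition on $\Gamma^c$. Interior elliptic regularity, together with regularity up to the flat, single-condition open edges, already places $u$ in $H^2_{\loc}$ away from the vertices; the only possible obstruction to global $H^2$-membership comes from the corners, so it suffices to analyze $u$ in a neighborhood of each vertex.

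Near a vertex with opening angle $\omega$ I would pass to polar coordinates $(r, \theta)$ centered at that vertex and apply the decomposition of Theorem~4.4.3.3: the solution splits as $u = u_{\mathrm{reg}} + u_{\mathrm{sing}}$, where $u_{\mathrm{reg}} \in H^2$ and $u_{\mathrm{sing}}$ is a finite linear combination of singular functions of the form $r^\lambda \phi (\theta)$, the sum running over the singular exponents $\lambda \in (0,1)$ attached to the vertex. By Lemma~4.4.1.4 such a model term $r^\lambda \phi(\theta)$ fails to lie in $H^2$ precisely when $0 < \lambda < 1$; hence $u \in H^2 (\Omega)$ will follow once I show that no vertex contributes a singular exponent below $1$.

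It then remains to list the exponents. They are the positive numbers $\lambda$ for which the transverse problem $\phi'' + \lambda^2 \phi = 0$ on $(0, \omega)$ admits a nontrivial solution under the boundary conditions dictated by the two edges meeting at the vertex. Where both edges carry the same condition (both Dirichlet or both Neumann) one finds $\lambda = k \pi / \omega$ with $k \in \N$; since $\Omega$ is convex, $\omega < \pi$, and already the smallest exponent $\pi / \omega$ exceeds $1$. At a vertex where $\Gamma$ meets $\Gamma^c$, the mixed conditions $\phi (0) = 0$ and $\phi' (\omega) = 0$ yield $\lambda = (2k-1) \pi / (2 \omega)$, the smallest being $\pi / (2 \omega)$, and the hypothesis $\omega < \pi / 2$ forces $\pi / (2 \omega) > 1$. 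Thus every singular exponent at every vertex is strictly larger than $1$, the singular part $u_{\mathrm{sing}}$ is absent, and $u \in H^2 (\Omega)$.

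The substantive point, and the only place requiring genuine care, is the precise determination of the singular exponents for the mixed boundary condition and the identification of $\pi / 2$ as the sharp threshold separating the $H^2$ and non-$H^2$ regimes at such a corner. The borderline integer/degenerate cases that would introduce logarithmic factors into $u_{\mathrm{sing}}$ cannot arise here, since all relevant exponents lie strictly above $1$ and none falls inside $(0,1)$; the interior and flat-edge estimates that glue the local pictures together are routine.
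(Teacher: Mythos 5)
Your argument is correct and follows exactly the route the paper takes: the paper offers no proof beyond citing Grisvard's Theorem~4.4.3.3 and Lemma~4.4.1.4, and your write-up simply unpacks those two results, computing the singular exponents $k\pi/\omega$ (pure corners, $>1$ by convexity) and $(2k-1)\pi/(2\omega)$ (mixed corners, $>1$ since $\omega < \pi/2$) and concluding that the singular part of the corner decomposition vanishes. This is the intended justification, just spelled out in detail.
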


Moreover, we will significantly make use of the following identity, which is a consequence of~\cite[Lemma~4.3.1.1--4.3.1.3]{G85}. 

\begin{lemma}\label{lem:Grisvard}
Let $u \in H^2 (\Omega)$ satisfy a Dirichlet boundary condition on $\Gamma$ and a Neumann boundary condition on its complement $\Gamma^c$. Then
\begin{align*}
 \int_\Omega (\partial_{1 2} u)^2 \d x = \int_\Omega (\partial_{1 1} u) (\partial_{2 2} u) \d x.
\end{align*}
\end{lemma}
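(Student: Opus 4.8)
The plan is to recognize the integrand as a divergence and reduce the identity to the vanishing of a boundary integral, exploiting the mixed boundary conditions side by side. First I would record the two pointwise identities, valid for smooth $u$,
\[
 (\partial_{12} u)^2 - (\partial_{11} u)(\partial_{22} u) = \diver \big( (\partial_2 u)(\partial_{12} u),\, -(\partial_2 u)(\partial_{11} u) \big)
\]
and, symmetrically,
\[
 (\partial_{12} u)^2 - (\partial_{11} u)(\partial_{22} u) = \diver \big( -(\partial_1 u)(\partial_{22} u),\, (\partial_1 u)(\partial_{12} u) \big),
\]
both of which follow by expanding the right-hand sides and cancelling the third-order terms. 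Applying the divergence theorem to each and writing $\tau = (-\nu_2, \nu_1)$ for the unit tangent obtained by rotating the outer normal $\nu$, the two surface integrals become $\int_{\partial\Omega} (\partial_2 u)\,\partial_\tau(\partial_1 u)\,\d s$ and $-\int_{\partial\Omega} (\partial_1 u)\,\partial_\tau(\partial_2 u)\,\d s$, respectively, using $\partial_\tau(\partial_j u) = -\nu_2 \partial_{1j} u + \nu_1 \partial_{2j} u$. Averaging the two representations yields
\[
 2 \int_\Omega \big[ (\partial_{12} u)^2 - (\partial_{11} u)(\partial_{22} u) \big] \d x = \int_{\partial\Omega} \big[ (\partial_2 u)\,\partial_\tau(\partial_1 u) - (\partial_1 u)\,\partial_\tau(\partial_2 u) \big] \d s.
\]

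The point of averaging is that the symmetric boundary integrand simplifies dramatically. On each (straight) side the outer normal $\nu$ is constant, so I would decompose the gradient into its normal and tangential parts, $\partial_1 u = \nu_1 \partial_\nu u - \nu_2 \partial_\tau u$ and $\partial_2 u = \nu_2 \partial_\nu u + \nu_1 \partial_\tau u$, and substitute. A short computation, using $\nu_1^2 + \nu_2^2 = 1$, collapses the integrand on each side to
\[
 (\partial_\tau u)\,\partial_\tau(\partial_\nu u) - (\partial_\nu u)\,\partial_\tau(\partial_\tau u),
\]
expressed purely through the normal and tangential derivatives and their tangential derivatives along the side. Now the boundary conditions finish the argument edge by edge: on a side contained in $\Gamma$ the Dirichlet condition makes $u$ vanish identically along the side, so $\partial_\tau u = 0$ and $\partial_\tau(\partial_\tau u) = 0$ there; on a side contained in $\Gamma^c$ the Neumann condition gives $\partial_\nu u = 0$ identically along the side, so $\partial_\nu u = 0$ and $\partial_\tau(\partial_\nu u) = 0$. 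In either case both terms vanish, so the integrand is zero on every side, and since the finitely many vertices are an arclength null set the whole boundary integral vanishes, giving the claim.

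The main obstacle is that everything above is a formal calculation that is only immediate for smooth $u$, whereas the hypothesis is merely $u \in H^2(\Omega)$ on a domain with corners. The hard part will be justifying the divergence theorem for the vector fields above (whose components are products of an $H^1$ and an $L^2$ factor) and, more subtly, making rigorous sense of differentiating the boundary traces $u|_{\text{side}}$ and $(\partial_\nu u)|_{\text{side}}$ tangentially, since these a priori live only in fractional Sobolev spaces, while also confirming that no hidden contributions survive at the vertices. This is exactly what the Green-type formulas of Grisvard (\cite[Lemma~4.3.1.1--4.3.1.3]{G85}) are designed to supply: they establish the required integration-by-parts identities for $H^2$ functions on plane polygons together with the precise vertex terms. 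I therefore expect the algebraic heart of the proof to be the reduction to $(\partial_\tau u)\,\partial_\tau(\partial_\nu u) - (\partial_\nu u)\,\partial_\tau(\partial_\tau u)$, while the analytic rigor consists in matching the computation to Grisvard's formulas; the use of the symmetric combination is what makes the integrand vanish pointwise on each side rather than only up to endpoint terms, which is precisely why the vertex contributions drop out.
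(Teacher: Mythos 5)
Your algebra is correct throughout (both divergence identities, the tangential rewriting of the flux terms, and the collapse of the symmetrized boundary integrand to $(\partial_\tau u)\,\partial_\tau(\partial_\nu u) - (\partial_\nu u)\,\partial_\tau(\partial_\tau u)$, which indeed vanishes on each Dirichlet and each Neumann side), and the paper gives no proof of this lemma beyond citing \cite[Lemma~4.3.1.1--4.3.1.3]{G85} --- exactly where you also place the analytic burden of justifying the integration by parts for $H^2$ functions and the absence of vertex contributions. So this is essentially the same approach, with the added value that you reconstruct the computation underlying Grisvard's lemmas and correctly identify the symmetric combination as the reason the boundary integrand vanishes pointwise on each side rather than only up to endpoint terms.
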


We emphasize that the latter statement is valid for polygons only and fails for more general, curved domains.

\section{An ordering result for the lowest mixed eigenvalues of polygons}

In this section we prove the following first result of this note.

\begin{theorem}\label{thm:split}
Let $\Omega \subset \R^2$ be a polygon and let $\Gamma \subset \partial \Omega$ such that $\Gamma^c = \partial \Omega \setminus \Gamma$ consists of one single line segment. Moreover, suppose that the angles at both vertices where $\Gamma$ and $\Gamma^c$ meet are strictly less than $\pi/2$. Then
\begin{align*}
 \lambda_1^{\Gamma^c} \leq \lambda_1^\Gamma.
\end{align*}
\end{theorem}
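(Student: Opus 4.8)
The plan is to test the variational characterization~\eqref{eq:minmax} of $\lambda_1^{\Gamma^c}$ with a suitable partial derivative of a first eigenfunction of $-\Delta_\Gamma$. First I would fix Cartesian coordinates so that the single segment $\Gamma^c$ lies on the $x_1$-axis with $\Omega$ contained in the upper half-plane, so that the outer unit normal along $\Gamma^c$ is $\nu = (0,-1)$. Let $u$ denote a first eigenfunction of $-\Delta_\Gamma$, so that $-\Delta u = \lambda_1^\Gamma u$ in $\Omega$, with $u = 0$ on $\Gamma$ and $\nabla u \cdot \nu = 0$ on $\Gamma^c$. Since $\Gamma^c$ is a single segment, the only angles where $\Gamma$ and $\Gamma^c$ meet are the two interior angles of $\Omega$ at the endpoints of $\Gamma^c$, and both are strictly less than $\pi/2$ by hypothesis; hence Proposition~\ref{prop:regularity} gives $u \in H^2(\Omega)$, and in particular $\partial_2 u \in H^1(\Omega)$.

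The candidate test function is $v := \partial_2 u$. Along $\Gamma^c$ the Neumann condition reads $-\partial_2 u = \nabla u \cdot \nu = 0$, so $v$ vanishes on $\Gamma^c$ and therefore $v \in H_{0,\Gamma^c}^1(\Omega)$ is admissible in~\eqref{eq:minmax} for $\lambda_1^{\Gamma^c}$. (One checks separately that $v \not\equiv 0$: otherwise $u$ would be independent of $x_2$, and since it vanishes on a side of $\Gamma$ not parallel to $\Gamma^c$ — such a side exists because the acute angle hypothesis forces the sides adjacent to $\Gamma^c$ to be slanted — it would vanish along a full vertical line meeting the interior, forcing $u \equiv 0$.) It then remains to evaluate the Rayleigh quotient of $v$. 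Writing $\nabla v = (\partial_{1 2} u, \partial_{2 2} u)$ and invoking the Grisvard identity of Lemma~\ref{lem:Grisvard}, I would compute
\begin{align*}
 \int_\Omega |\nabla v|^2 \d x
 &= \int_\Omega (\partial_{1 2} u)^2 \d x + \int_\Omega (\partial_{2 2} u)^2 \d x \\
 &= \int_\Omega (\partial_{1 1} u)(\partial_{2 2} u) \d x + \int_\Omega (\partial_{2 2} u)^2 \d x
 = \int_\Omega (\partial_{2 2} u)\, \Delta u \, \d x = - \lambda_1^\Gamma \int_\Omega (\partial_{2 2} u)\, u \, \d x.
\end{align*}

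An integration by parts in the $x_2$-variable, legitimate since $u \in H^2(\Omega)$, then yields
\[
 \int_\Omega |\nabla v|^2 \d x
 = \lambda_1^\Gamma \int_\Omega (\partial_2 u)^2 \d x - \lambda_1^\Gamma \int_{\partial \Omega} (\partial_2 u)\, u \, \nu_2 \, \d s,
\]
where $\nu_2$ is the second component of $\nu$. The decisive point is that the boundary integral vanishes: on $\Gamma$ one has $u = 0$, while on $\Gamma^c$ one has $\partial_2 u = 0$, so the integrand vanishes on all of $\partial\Omega$. Consequently the Rayleigh quotient of $v$ equals exactly $\lambda_1^\Gamma$, and~\eqref{eq:minmax} gives $\lambda_1^{\Gamma^c} \leq \lambda_1^\Gamma$.

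I expect the genuine work to lie not in the algebra above, which collapses cleanly thanks to the complementary boundary conditions, but in the supporting analytic points: securing $u \in H^2(\Omega)$, which is precisely where the sub-$\pi/2$ angle hypothesis enters through Proposition~\ref{prop:regularity}; justifying the integration by parts and the vanishing of the boundary term rigorously at $H^2$-regularity, including the behavior at the corners; and confirming $\partial_2 u \not\equiv 0$. The conceptual heart is the pairing of the Grisvard identity $\int_\Omega (\partial_{1 2} u)^2 \d x = \int_\Omega (\partial_{1 1} u)(\partial_{2 2} u) \d x$ with the observation that $v = \partial_2 u$ carries exactly the Dirichlet datum needed on $\Gamma^c$, while the eigenfunction's own Dirichlet datum on $\Gamma$ annihilates the remaining boundary contribution.
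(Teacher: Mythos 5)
Your proposal is correct and follows essentially the same route as the paper's proof: after securing $H^2$-regularity via Proposition~\ref{prop:regularity}, you test \eqref{eq:minmax} with the derivative of the first eigenfunction of $-\Delta_\Gamma$ in the direction normal to $\Gamma^c$ and use Lemma~\ref{lem:Grisvard} to show its Rayleigh quotient equals $\lambda_1^\Gamma$ exactly, with the boundary term killed by the complementary boundary data. The only differences are cosmetic — you rotate coordinates so that $\Gamma^c$ is horizontal and work with $\partial_2 u$ instead of $\partial_1 u$, and you integrate by parts in one variable rather than via the divergence identity — so there is nothing further to add.
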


\begin{proof}
Without loss of generality we assume that $\Gamma^c$ is parallel to the $x_2$-axis. Let $u$ be a real-valued eigenfunction of $- \Delta_\Gamma$ corresponding to the eigenvalue $\lambda_1^\Gamma$, and let $v = \partial_1 u$. It follows from Proposition~\ref{prop:regularity} that $v$ belongs to $H^1 (\Omega)$. Moreover, since $u$ satisfies a Neumann boundary condition on $\Gamma^c$ and the first unit vector $(1, 0)^\top$ is normal to $\Gamma^c$, it follows $v |_{\Gamma^c} = 0$, i.e., $v$ is an admissible test function for the Rayleigh quotient of $- \Delta_{\Gamma^c}$. Note that $v$ is nontrivial since $\partial_1 u = 0$ identically on $\Omega$ together with $u |_\Gamma = 0$ would imply $u = 0$ on $\Omega$. We employ Lemma~\ref{lem:Grisvard} to obtain
\begin{align}\label{eq:yeah}
\begin{split}
 0 & = \int_\Omega (\partial_{11} u) (\partial_{22} u) - (\partial_{12} u)^2 \d x = \int_\Omega \big( (\partial_{11} u) \Delta u - (\partial_{11} u)^2 - (\partial_{12} u)^2 \big) \d x \\
 & = - \lambda_1^\Gamma \int_\Omega \diver \binom{\partial_1 u}{0} u \d x - \int_\Omega \big( (\partial_{11} u)^2 + (\partial_{12}u)^2 \big) \d x \\
 & = \lambda_1^\Gamma \Big( \int_\Omega \binom{\partial_1 u}{0} \cdot \nabla u \d x - \int_{\partial \Omega} u \binom{\partial_1 u}{0} \cdot \nu \d \sigma \Big) - \int_\Omega |\nabla (\partial_1 u)|^2 \d x \\
 & = \lambda_1^\Gamma \int_\Omega (\partial_1 u)^2 \d x - \int_\Omega |\nabla (\partial_1 u)|^2 \d x,
 \end{split}
\end{align}
where in the last step we have used $u |_\Gamma = 0$ and $\partial_1 u |_{\Gamma^c} = 0$. It follows
\begin{align}\label{eq:voila}
 \int_\Omega |\nabla v|^2 \d x = \lambda_1^\Gamma \int_\Omega v^2 \d x,
\end{align}
and the assertion of the theorem follows with the help of the identity~\eqref{eq:minmax} applied to $\Gamma^c$ instead of $\Gamma$.
\end{proof}

\begin{remark}
The idea of using derivatives of eigenfunctions as test functions was used in~\cite{LW86} to establish eigenvalue inequalities between Dirichlet and Neumann eigenvalues of the Laplacian on smooth domains. In~\cite{LR17} it was used to compare mixed and Dirichlet eigenvalues on polygons and polyhedra. We remark that the methods of~\cite{LR17} may be employed to extend Theorem~\ref{thm:split} to higher dimensions.
\end{remark}

Next we apply Theorem~\ref{thm:split} to triangles and obtain the following three statements. If $\Omega$ is a triangle we denote its sides by $S$, $M$ and $L$, in nondecreasing order of their lengths. We remark that the inequality (iii) in the following corollary is a part of Conjecture~1.2 in~\cite{S16}.

\begin{corollary}
If $\Omega$ is any triangle then the following assertions hold.
\begin{enumerate}
 \item If both angles enclosing $S$ are strictly less than $\pi/2$ then $\lambda_1^S \leq \lambda_1^{L \cup M}$.
 \item If both angles enclosing $M$ are strictly less than $\pi/2$ then $\lambda_1^M \leq \lambda_1^{L \cup S}$.
 \item In any case, $\lambda_1^L \leq \lambda_1^{M \cup S}$.
\end{enumerate}
\end{corollary}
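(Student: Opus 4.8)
The plan is to deduce all three statements directly from Theorem~\ref{thm:split} by applying it to the appropriate choice of $\Gamma^c$. The theorem compares $\lambda_1^{\Gamma^c}$ and $\lambda_1^\Gamma$ precisely when $\Gamma^c$ is a single line segment (here, one side of the triangle) and the two angles where $\Gamma$ and $\Gamma^c$ meet are both strictly less than $\pi/2$. For a triangle, $\Gamma^c$ being a single side means $\Gamma$ is the union of the other two sides, and the two relevant angles are exactly the two angles at the endpoints of the chosen side $\Gamma^c$.

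First I would prove (iii), which requires no extra hypothesis. Take $\Gamma^c = L$, the longest side, so that $\Gamma = M \cup S$. The two angles enclosing $L$ are the two base angles at the endpoints of the longest side. The key geometric fact is that in any triangle the angle opposite the longest side is the largest angle; hence the two angles enclosing $L$ are each strictly less than the angle opposite $L$. Since the three angles sum to $\pi$, the largest angle is at least $\pi/3$, but more to the point, each of the two angles adjacent to the longest side must be strictly less than $\pi/2$: if one of them were $\geq \pi/2$, then the angle opposite $L$ (being the largest) would also be $\geq \pi/2$, giving two angles each $\geq \pi/2$ and a total exceeding $\pi$, a contradiction. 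Thus the hypotheses of Theorem~\ref{thm:split} are met with $\Gamma^c = L$, and the theorem yields $\lambda_1^L = \lambda_1^{\Gamma^c} \leq \lambda_1^\Gamma = \lambda_1^{M \cup S}$.

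For (i) and (ii) I would argue identically but carry the acuteness as an explicit hypothesis rather than deriving it. For (i), set $\Gamma^c = S$ and $\Gamma = L \cup M$; the two angles where $\Gamma$ and $\Gamma^c$ meet are precisely the two angles enclosing $S$, which are assumed strictly less than $\pi/2$, so Theorem~\ref{thm:split} gives $\lambda_1^S \leq \lambda_1^{L \cup M}$. Statement (ii) follows the same way with $\Gamma^c = M$ and $\Gamma = L \cup S$, invoking the assumption that both angles enclosing $M$ are acute. In each case the only verification needed is that the chosen side is a single line segment (automatic for a triangle) and that the two adjacent angles satisfy the angle condition.

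The main obstacle is not analytic at all, since Theorem~\ref{thm:split} does the real work; the only substantive point is the elementary geometric observation underlying (iii), namely that the two angles adjacent to the longest side of any triangle are automatically strictly less than $\pi/2$. This is exactly why (iii) holds unconditionally whereas (i) and (ii) require the acuteness hypothesis explicitly: for the shorter sides $S$ and $M$ one cannot in general guarantee that both adjacent angles are acute (an obtuse triangle has one angle exceeding $\pi/2$, and that obtuse angle is adjacent to $S$ and to $M$ but never to $L$). I would therefore present (iii) first, prove the angle claim in one line by contradiction as above, and then obtain (i) and (ii) as immediate applications.
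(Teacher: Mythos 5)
Your proposal is correct and follows exactly the paper's argument: items (i) and (ii) are immediate applications of Theorem~\ref{thm:split} with $\Gamma^c = S$ and $\Gamma^c = M$ respectively, and (iii) follows from the same theorem with $\Gamma^c = L$ once one observes that the two angles adjacent to the longest side of any triangle are automatically strictly less than $\pi/2$. Your one-line contradiction argument for that geometric fact is a correct (and slightly more explicit) version of the remark the paper makes in passing.
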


\begin{proof}
The assertions (i) and (ii) are direct consequences of Theorem~\ref{thm:split}. For item~(iii) just note that the angles enclosing the longest edge of a triangle can never be equal to or larger than $\pi/2$.
\end{proof}

\section{A remark on the lowest eigenvalues of right triangles}

In this short section we restrict ourselves to the class of right triangles and compare the lowest eigenvalue for a Dirichlet condition on a cathetus to the one for the hypotenuse, see Figure~\ref{fig:rightTriangle}. This extends observations from~\cite[Theorem~1.1]{S16} (where additional restrictions on the angles were required) to right triangles with arbitrary angles.

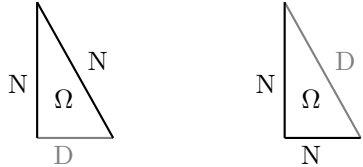
\begin{figure}[h]
\begin{tikzpicture}
\pgfsetlinewidth{0.8pt}
\color{gray}
\pgfxyline(-5,-0.8)(-4,-0.8)
\pgfputat{\pgfxy(-4.65,-1.15)}{\pgfbox[center,base]{D}}
\color{black}
\pgfxyline(-5,1)(-4,-0.8)
\pgfxyline(-5,-0.8)(-5,1)
\pgfputat{\pgfxy(-4.65,-0.4)}{\pgfbox[center,base]{$\Omega$}}
\pgfputat{\pgfxy(-5.25,-0.2)}{\pgfbox[center,base]{N}}
\pgfputat{\pgfxy(-4.2,0.1)}{\pgfbox[center,base]{N}}
\end{tikzpicture} 
\qquad \qquad \qquad
\begin{tikzpicture}
\pgfsetlinewidth{0.8pt}
\color{gray}
\pgfxyline(-5,1)(-4,-0.8)
\pgfputat{\pgfxy(-4.2,0.1)}{\pgfbox[center,base]{D}}
\color{black}
\pgfxyline(-5,-0.8)(-4,-0.8)
\pgfxyline(-5,-0.8)(-5,1)
\pgfputat{\pgfxy(-4.65,-0.4)}{\pgfbox[center,base]{$\Omega$}}
\pgfputat{\pgfxy(-5.25,-0.2)}{\pgfbox[center,base]{N}}
\pgfputat{\pgfxy(-4.65,-1.15)}{\pgfbox[center,base]{N}}
\end{tikzpicture}
\vspace*{2mm}
\caption{Two choices of mixed boundary conditions on the same right triangle (D = Dirichlet, N = Neumann). By Theorem~\ref{thm:triangle} the lowest eigenvalue of the left configuration does not exceed the lowest eigenvalue of the right one.}
\label{fig:rightTriangle}
\end{figure}

\begin{theorem}\label{thm:triangle}
Let $\Omega$ be a right triangle with sides $S, M$ and $L$ ordered nondecreasingly by their lengths. Then 
\begin{align*}
 \max \{ \lambda_1^S, \lambda_1^M\} \leq \lambda_1^L.
\end{align*}
\end{theorem}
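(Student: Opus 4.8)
The plan is to adapt the derivative-test-function argument from the proof of Theorem~\ref{thm:split}, but applied to the eigenfunction of the \emph{hypotenuse} problem rather than to a cathetus problem. A direct appeal to Theorem~\ref{thm:split} is not available here: to obtain, say, $\lambda_1^S \le \lambda_1^{M\cup L}$ one would take $\Gamma^c = S$, but the vertex where the two catheti meet is a right angle, so the hypothesis that both junction angles be strictly less than $\pi/2$ fails. The key observation is that this obstruction disappears once the Dirichlet condition sits on $L$: then both catheti carry Neumann conditions and the right-angle vertex becomes a harmless Neumann--Neumann corner, while the only Dirichlet--Neumann junctions are the two acute vertices.

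First I would fix coordinates with the right-angle vertex at the origin and the two catheti along the positive $x_1$- and $x_2$-axes; say $S$ lies on the $x_1$-axis and $M$ on the $x_2$-axis. Let $u$ be a real eigenfunction of $-\Delta_L$ for $\lambda_1^L$, i.e.\ with Dirichlet on $L$ and Neumann on $S\cup M$. Since the only Dirichlet--Neumann junctions are the endpoints of $L$, whose angles are the acute angles of the triangle and hence strictly less than $\pi/2$ (the right-angle vertex being a Neumann--Neumann corner of angle $\pi/2<\pi$, which does not obstruct regularity), Proposition~\ref{prop:regularity} gives $u\in H^2(\Omega)$. I would then introduce $v_1 := \partial_1 u$ and $v_2 := \partial_2 u$, both in $H^1(\Omega)$. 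On $M$ the outer normal is $(-1,0)^\top$, so the Neumann condition forces $\partial_1 u = 0$ there, whence $v_1|_M = 0$; likewise $v_2|_S = 0$. Thus $v_2$ is admissible for the Rayleigh quotient of $-\Delta_S$ and $v_1$ for that of $-\Delta_M$. Both are nontrivial: if $\partial_2 u\equiv 0$ then $u$ depends on $x_1$ alone, but $u$ vanishes on the hypotenuse, whose projection to the $x_1$-axis is the full interval spanned by $\Omega$, forcing $u\equiv 0$; the case $\partial_1 u\equiv 0$ is symmetric.

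Finally, for each $i$ I would run the computation in~\eqref{eq:yeah} with the indices arranged so that $v_i$ appears. Applying Lemma~\ref{lem:Grisvard} (to rewrite $\int_\Omega(\partial_{12}u)^2$) and integrating by parts once yields
\begin{align*}
 \int_\Omega |\nabla v_i|^2\,\d x = \lambda_1^L \int_\Omega v_i^2\,\d x - \lambda_1^L \int_{\partial\Omega} u\,v_i\,\nu_i\,\d\sigma .
\end{align*}
The boundary integral vanishes side by side: on $L$ because $u=0$; on the cathetus on which $v_i$ vanishes because $v_i=0$ there; and on the remaining cathetus because its normal has vanishing $i$-th component, so $\nu_i=0$. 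Hence $\int_\Omega|\nabla v_i|^2\,\d x = \lambda_1^L\int_\Omega v_i^2\,\d x$, and inserting $v_2$ and $v_1$ into~\eqref{eq:minmax} for $-\Delta_S$ and $-\Delta_M$, respectively, gives $\lambda_1^S\le\lambda_1^L$ and $\lambda_1^M\le\lambda_1^L$, which is the claim.

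The delicate points, and the only places where the right-angle hypothesis is genuinely used, are the $H^2$-regularity at the right-angle corner and the termwise vanishing of the boundary integral; both hinge on placing the Dirichlet condition on $L$, so that the right angle is a pure Neumann corner and each partial derivative $v_i$ vanishes on exactly the cathetus perpendicular to the $x_i$-axis. I expect the main obstacle to be precisely the bookkeeping that confirms the boundary term vanishes on all three sides simultaneously, together with the verification that Proposition~\ref{prop:regularity} still applies despite the non-acute (Neumann--Neumann) corner.
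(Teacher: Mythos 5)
Your proposal is correct and follows essentially the same route as the paper: take the first eigenfunction $u$ of $-\Delta_L$, use Proposition~\ref{prop:regularity} (the Dirichlet--Neumann junctions being the two acute vertices), test the Rayleigh quotients of $-\Delta_S$ and $-\Delta_M$ with the partial derivatives of $u$ taken normal to the respective cathetus, and kill the boundary term in the computation~\eqref{eq:yeah} side by side exactly as you describe. The only cosmetic difference is that the paper treats one cathetus at a time with coordinates chosen so that the relevant side is parallel to the $x_2$-axis, whereas you fix one coordinate system and handle $v_1=\partial_1 u$ and $v_2=\partial_2 u$ simultaneously.
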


\begin{proof}
We show the inequality $\lambda_1^S \leq \lambda_1^L$; the inequality $\lambda_1^M \leq \lambda_1^L$ is analogous. The proof is similar to the proof of Theorem~\ref{thm:split}. Let us assume w.l.o.g.\ that $S$ is parallel to the $x_2$-axis. We take a nontrivial, real-valued $u \in \ker (- \Delta_L - \lambda_1^L)$ and set $v = \partial_1 u$. Then $v$ is nontrivial and $v |_S = 0$. Now we repeat the calculation~\ref{eq:yeah} with $\Gamma = L$ and observe that the boundary integral is zero since $u$ vanishes on $L$, which is the hypotenuse, $\partial_1 u$ vanishes on $S$, and for the other cathetus, $M$, the normal vector $\nu$ is plus or minus the second unit vector $(0, 1)^\top$. Hence we arrive at~\eqref{eq:voila} with $\Gamma = L$, which completes the proof.
\end{proof}

\end{document}